\newcommand{\N}{\mathbb{N}}
\newcommand{\R}{\mathbb{R}}
\newcommand{\PP}{\mathbb{P}}
\DeclareMathOperator*{\esssup}{ess.sup\,}
\DeclareMathOperator*{\essinf}{ess.inf\,}
\DeclareMathOperator*{\esslimsup}{ess.limsup\, }
\DeclareMathOperator*{\essliminf}{ess.liminf\, }
\newtheorem{defn}{Definition}[section]
\newtheorem{prop}{Proposition}[section]
\newtheorem{thm}{Theorem}[section]
\providecommand{\keywords}[1]{\textbf{\textit{Keywords:}} #1}
\begin{document}


\title{A random version of Simons' inequality}

\author{José M. Zapata}

\date{\today}
\maketitle


\begin{abstract}
The purpose of this paper is to provide a random version of Simons' inequality.
\end{abstract}

\keywords{Simons' inequality, $L^0$-modules.}
 

\section*{Introduction}

Based on the original proof of Simons’ inequality given in \cite{key-1}, we establish a random version of Simons’ inequality, so that instead of using functions taking their values in a real interval, we deal with functions taking their values in a ball of random radius.

In the first part of this paper we establish some necessary notions, and later, we enunciate the theorem and provide the corresponding proof.
   
\section{Preliminary notions}

Given a probability space $\left(\Omega,\mathcal{F},\PP\right)$, which will be fixed for the rest of this paper, we consider the set $L^{0} \left(\Omega,\mathcal{F},\PP\right)$, the set of equivalence classes of  real valued $\mathcal{F}$-measurable random variables, which will be denoted simply as $L^{0}$.

It is known that the triple $\left(L^{0},+,\cdot\right)$ endowed with the partial order of the almost sure dominance is a lattice ordered ring.

We say ``$X\geq Y$`` if $\PP\left( X\geq Y \right)=1$.
Likewise, we say ``$X>Y$'', if $\PP\left( X> Y \right)=1$. 

And, given $A\in \mathcal{F}$, we say that $X>Y$ (respectively,  $X \geq Y$) on $A$, if $\PP\left(X>Y \mid A\right)=1$ (respectively , if $\PP\left(X \geq Y \mid  A \right)=1$).

%
%
%

We can also define the set $\bar{L^{0}}$, the set of equivalence classes of  $\mathcal{F}$-measurable random variables taking values in $\bar{\R}=\R\cup\{\pm\infty\}$, and extend the partial order of the almost sure dominance to $\bar{L^{0}}$. 

~\\

In A.5 of \cite{key-3} is proved the proposition below

\begin{prop}
Let $\phi$  be a subset of $L^{0}$, then
\begin{enumerate}
\item There exists $Y^{*}\in\bar{L^{0}}$ such that $Y^{*}\geq Y$ for all 
$Y\in\phi,$ and such that any other $Y'$ satisfying the same, verifies $Y'\geq Y^{*}.$
\item Suppose that  $\phi$ is directed upwards. Then there exists a increasing sequence  $Y_{1}\leq Y_{2}\leq...$ in $\phi,$ such that 
 $Y_{n}$ converges to $Y^{*}$ almost surely.
\end{enumerate}
\end{prop}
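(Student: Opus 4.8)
The plan is to reduce the problem to a countable supremum by squeezing everything into a bounded range. First I would fix a strictly increasing bijection $\varphi\colon\bar{\R}\to[-1,1]$ — for instance the continuous extension of $x\mapsto\frac{2}{\pi}\arctan x$ with $\varphi(\pm\infty)=\pm1$ — and note that for every \emph{countable} nonempty $\psi\subseteq\phi$ the almost sure pointwise supremum $\sup_{Y\in\psi}Y$ is a well-defined element of $\bar{L^0}$, with $\varphi\big(\sup_{Y\in\psi}Y\big)=\sup_{Y\in\psi}\varphi(Y)\in L^0$ bounded. Then I would set $c:=\sup\{\E[\sup_{Y\in\psi}\varphi(Y)]:\psi\subseteq\phi\text{ countable, nonempty}\}$, which is a real number in $[-1,1]$ (if $\phi=\emptyset$ simply take $Y^*\equiv-\infty$). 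Choosing countable sets $\psi_k\subseteq\phi$ with $\E[\sup_{Y\in\psi_k}\varphi(Y)]\to c$, putting $\psi^*:=\bigcup_k\psi_k$ (still countable), enumerating $\psi^*=\{Y_n:n\geq1\}$, and defining $Y^*:=\sup_n Y_n\in\bar{L^0}$, monotone convergence gives $\E[\varphi(Y^*)]=c$.

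Next I would check the two defining properties. For the upper bound property: given any $Y\in\phi$, the family $\psi^*\cup\{Y\}$ is countable, so $\E[\varphi(Y^*\vee Y)]\leq c=\E[\varphi(Y^*)]$; since $\varphi(Y^*\vee Y)-\varphi(Y^*)\geq0$ has nonpositive expectation it must vanish almost surely, and strict monotonicity of $\varphi$ then yields $Y\leq Y^*$ almost surely. For minimality: if $Y'\in\bar{L^0}$ satisfies $Y'\geq Y$ for all $Y\in\phi$, then in particular $Y'\geq Y_n$ for every $n$, hence $Y'\geq\sup_n Y_n=Y^*$. This establishes part (1). I expect the passage from ``a nonnegative random variable with nonpositive expectation'' to ``it vanishes almost surely'' to be the crux of the argument: it is routine, but it is exactly where the boundedness of $\varphi$ is indispensable (so that the expectations are meaningful and finite), while the strict monotonicity of $\varphi$ is what converts the conclusion back into an almost sure inequality for the original, possibly infinite-valued, variables.

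Finally, for part (2) I would assume $\phi$ is directed upwards and build the increasing sequence recursively from the enumeration $\psi^*=\{Y_n\}$: pick $Z_1\in\phi$ with $Z_1\geq Y_1$, and given $Z_{n-1}$ use directedness to pick $Z_n\in\phi$ with $Z_n\geq Z_{n-1}$ and $Z_n\geq Y_n$. Then $(Z_n)$ is increasing in $\phi$, satisfies $Z_n\leq Y^*$ because $Y^*$ is an upper bound of $\phi$ by part (1), and satisfies $Z_n\geq Y_n$. Being monotone and bounded above in $\bar{L^0}$, $Z_n$ converges almost surely to some $Z\leq Y^*$; but $Z\geq Y_n$ for every $n$ forces $Z\geq\sup_n Y_n=Y^*$, so $Z=Y^*$ and $Z_n\to Y^*$ almost surely, as required.
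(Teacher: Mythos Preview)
Your argument is correct. Note, however, that the paper does not actually give a proof of this proposition: it simply cites A.5 of F\"ollmer--Schied, \emph{Stochastic Finance}, and states the result without argument. Your write-up is essentially the standard proof found in that reference --- compose with a bounded strictly increasing bijection such as $\tfrac{2}{\pi}\arctan$, maximise the expectation over countable subfamilies to extract a countable $\psi^{*}$ whose pointwise supremum is $Y^{*}$, and use the expectation argument to show $Y^{*}$ dominates every $Y\in\phi$; then, under upward directedness, interleave the enumeration of $\psi^{*}$ with successive upper bounds in $\phi$ to produce the increasing approximating sequence. So there is nothing to compare against in the paper itself, but your proof matches the one the paper defers to.
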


\begin{defn}
Under the conditions of the previous proposition, 
$Y^{*}$ is called essential supremum of $\phi$, and we write

\[
\esssup\phi=\underset{Y\in\phi}{\esssup Y}:=Y^{*}
\]

The essential infimum of $\phi$ is defined as

\[
\essinf\phi=\underset{Y\in\phi}{\essinf Y}:=\underset{Y\in\phi}{-\esssup\left(-Y\right)}
\]
\end{defn}
\begin{defn}
Given a sequence $\left\{ Y_{n}\right\}_{n\in\N}$ in
$L^{0}$, we define the essential  limit inferior (respectively, the essential  limit superior) as 
\[
\underset{n\rightarrow\infty}{\essliminf} Y_{n}:=\underset{n}{\esssup}\underset{m\geq n}{\essinf} Y_{m}
\]
(respectively, $\underset{n\rightarrow\infty}{\esslimsup} Y_{n}:=\underset{n}{\essinf}\underset{m\geq n}{\esssup} Y_{m}$).
\end{defn}

Likewise, let us give a couple of definitions.

\begin{defn}

Given $\varepsilon\in L_{++}^{0}$ we define
\[
B_{\varepsilon}:=\left\{ Y\in L^{0};\:\left|Y\right|\leq\varepsilon\right\} 
\]
the ball of radius $\varepsilon$ centered at $0\in L^{0}$.
\end{defn}

\begin{defn}
Given a subset $S$ of a $L^0$-module $E$, we define the $L^0$-convex hull of $S$ as the set below
\[
co_{L^0}(S):=\left\{\underset{i\in I}{\sum}{Y_i X_i} ; \: I \textnormal{ finite, }X_i \in E \textnormal{, } Y_i \in L^0_+\textnormal{, }\underset{i\in I}{\sum}{Y_i}=1 \right\}
\]
\end{defn}

\section{Random Simons' inequality}

Finally we are going to introduce the random version of Simons' inequality. The proof is an adaptation of the original proof of Simons' inequality from \cite{key-1}. 

The development is essentially the same as \cite{key-1}, but we must overcome the obstacle that the order of the almost sure dominance on $L^0$ is not a total order. This is obtained by defining piece wise random variables, or in other words, countable concatenations of $L^0$.   

\begin{thm}
\label{thm: Simons' inequality}
Let $E$ be a set, and let $\{f_n\}_{n\in\N}$ a sequence of functions on $E$ taking their values in $B_\varepsilon$ for some $\varepsilon \in L^0_{++}$. Let $S$ be a subset of $E$ such that for every sequence $\{Y_n\}_{n\in\N}$ in $L^0_{++}$ with $\sum_{n\geq 1} Y_n=1$, there exists $Z\in S$ such that 
\[
\underset{X\in E}{\esssup}\underset{n\geq 1}{\sum}{Y_n f_n (X)}=\underset{n\geq 1}{\sum}{Y_n f_n (Z)}.
\]
Then
\[
\underset{X\in S}{\esssup}{\underset{n\rightarrow\infty}{\esslimsup}f_n(X)}\geq \underset{f\in co_{L^0}(f_n)}{\essinf}\underset{X\in E}{\esssup}f(X)
\]
\end{thm}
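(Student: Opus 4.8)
The plan is to transcribe the classical proof of Simons' inequality from \cite{key-1} into the $L^0$-module setting; the one genuinely new ingredient will be that several steps of that proof single out, for a given point, an index ``beyond which'' some estimate holds, and in $L^0$ this index is an $\mathcal F$-measurable function of $\omega$, so the scalar argument must be run on each of the countably many measurable sets on which that index is constant and the pieces concatenated.

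First I would set $b:=\essinf_{f\in co_{L^0}(f_n)}\esssup_{X\in E}f(X)$ and $a:=\esssup_{X\in S}\esslimsup_{n}f_n(X)$. Since $|f_n|\le\varepsilon$, every $f\in co_{L^0}(f_n)$ has $|f(X)|\le\varepsilon$ for all $X$, so $-\varepsilon\le b\le\varepsilon$ and $-\varepsilon\le a\le\varepsilon$; in particular $a,b\in L^0$ and the claim $a\ge b$ is an inequality between honest elements of $L^0$. It then suffices to prove that for each $\delta\in L^0_{++}$ there is some $Z\in S$ with $\esslimsup_n f_n(Z)\ge b-\delta$: applying this with $\delta=1/j$ for $j\in\N$ gives $a\ge b-1/j$ almost surely for every $j$, and letting $j\to\infty$ yields $a\ge b$.

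Fix $\delta\in L^0_{++}$. I would first record the auxiliary fact that \emph{every} sequence $(Y_n)$ in $L^0_{++}$ with $\sum_n Y_n=1$ satisfies $\esssup_{X\in E}\sum_n Y_nf_n(X)\ge b$: for each $N$ the truncation $\sum_{n\le N}Y_nf_n+\bigl(\sum_{n>N}Y_n\bigr)f_1$ is a finite $L^0$-convex combination of the $f_n$, hence lies in $co_{L^0}(f_n)$ and has essential supremum $\ge b$, while it differs pointwise from $\sum_n Y_nf_n$ by at most $2\varepsilon\sum_{n>N}Y_n$; letting $N\to\infty$ gives the claim. Following \cite{key-1}, I would then choose a sequence $(\delta_k)$ in $L^0_{++}$ with $\sum_k\delta_k\le\delta$ and build recursively a strictly increasing sequence $0=m_0<m_1<m_2<\cdots$ of integers together with finitely supported $L^0$-convex weight vectors carried by the blocks $(m_{k-1},m_k]$; invoking the hypothesis along the construction, and keeping all partial weight sequences strictly positive by padding them with a fixed strictly positive summable sequence of weights, one assembles a single $(Y_n)\in L^0_{++}$ with $\sum_nY_n=1$ that is block-structured in this sense. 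Let $Z\in S$ be the point furnished by the hypothesis for this $(Y_n)$, so that $\sum_nY_nf_n(Z)=\esssup_{X\in E}\sum_nY_nf_n(X)\ge b$ by the auxiliary fact, and write $c_k:=\sum_{m_{k-1}<n\le m_k}Y_n\in L^0_{++}$ and $g_k:=c_k^{-1}\sum_{m_{k-1}<n\le m_k}Y_nf_n\in co_{L^0}(f_n)$, so that $\sum_k c_kg_k(Z)\ge b$. The construction from \cite{key-1} is to be arranged precisely so that, together with the control on $\esssup_X g_k(X)$, this forces $\esslimsup_k g_k(Z)\ge b-\delta$. Since $g_j(Z)$ is an $L^0$-convex combination of $\{f_n(Z):m_{j-1}<n\le m_j\}$, for every $j\ge k$ one has $g_j(Z)\le\esssup_{n>m_{k-1}}f_n(Z)$, hence $\esssup_{j\ge k}g_j(Z)\le\esssup_{n>m_{k-1}}f_n(Z)$; taking the essential infimum over $k$ and using that $(m_{k-1})_k$ is cofinal in $\N$ gives $\esslimsup_k g_k(Z)\le\esslimsup_n f_n(Z)$, whence $\esslimsup_n f_n(Z)\ge b-\delta$, as required.

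The hard part will be the step in which $\esslimsup_k g_k(Z)\ge b-\delta$ is extracted by running Simons' scalar estimate: there one supposes, towards a contradiction, that on some event $A\in\mathcal F$ with $\PP(A)>0$ one has $g_k(Z)\le b-\delta$ on $A$ for all $k$ beyond a threshold, and plays this against $\sum_k c_kg_k(Z)\ge b$ and the bounds on $\esssup_X g_k(X)$. In $L^0$ that threshold is an $\mathcal F$-measurable $\N$-valued function $K$ on $A$, so one must partition $A=\bigsqcup_{j\in\N}(A\cap\{K=j\})$, carry out the elementary ``finitely many exceptional blocks'' estimate separately on each $A\cap\{K=j\}$, and recombine the countably many conclusions. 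The same countable-concatenation device, together with the sequential description of essential suprema of upward directed families recalled in Section 1, is what will legitimise the interchanges of $\esssup$, $\essinf$ and the block decomposition used above, and what will make the recursive construction of $(Y_n)$ — in which each successive block must be taken long enough for an estimate that holds only $\omega$-by-$\omega$ — go through; handling this measurability bookkeeping cleanly, while keeping every weight sequence in $L^0_{++}$ so that the hypothesis applies, is the crux.
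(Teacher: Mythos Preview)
Your proposal does not follow the proof in \cite{key-1}, despite saying it does, and the substitute you sketch leaves the decisive step as a black box.

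Simons' original argument, which the paper transcribes, has no block decomposition $0=m_0<m_1<\cdots$ and no contradiction step. One fixes $\lambda\in(0,1)$ (in $L^0_{++}$) satisfying an elementary inequality, sets $C_n:=co_{L^0}\{f_p:p\ge n\}$, and recursively chooses $g_n\in C_n$ that approximately minimises
\[
\gamma_n(h)=\underset{Z\in S}{\esssup}\Bigl(\sum_{p\le n-1}\lambda^{p-1}g_p+\lambda^{n-1}h\Bigr)
\]
up to $\delta(\lambda/2)^n$. A telescoping estimate for $\esssup_S s_n$, with $s_n=\sum_{p\le n}\lambda^{p-1}g_p$, then yields a lower bound, and applying the hypothesis once to the geometric weights $(1-\lambda)\lambda^{n-1}$ produces $Z_0\in S$ with $g_n(Z_0)\ge m-2\delta$ for every $n$; since $g_n\in C_n$, this gives $\esslimsup_n f_n(Z_0)\ge m-2\delta$ directly. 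The one genuinely new ingredient in the $L^0$ setting is the construction of the approximate minimiser $g_n$: one shows $\{\gamma_n(h):h\in C_n\}$ is downward directed by pasting two candidates along the event $\{\gamma_n(g)\le\gamma_n(g')\}$, extracts a decreasing sequence $\gamma_n(h_k)\searrow\essinf_h\gamma_n(h)$, and concatenates the $h_k$ along the partition $A_k=\{\gamma_n(h_k)<\essinf+\delta(\lambda/2)^n\}\setminus A_{k-1}$ to obtain $g_n$. That is where countable concatenation enters; there is no measurable threshold $K$ and no contradiction argument.

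By contrast, the heart of your scheme is the sentence ``the construction from \cite{key-1} is to be arranged precisely so that \ldots\ this forces $\esslimsup_k g_k(Z)\ge b-\delta$'': you never say how the blocks $(m_{k-1},m_k]$ or the block weights are chosen, what ``control on $\esssup_X g_k(X)$'' you obtain, or how that control combines with $\sum_k c_k g_k(Z)\ge b$ to give the $\esslimsup$ bound. You also remark that each block must be ``long enough for an estimate that holds only $\omega$-by-$\omega$'', which is in tension with your having fixed deterministic integers $m_k$; and the hypothesis is invoked both ``along the construction'' and at the end, without explanation. As written this is a plan with the main lemma missing, and the technical difficulty you flag (partitioning over the level sets of a measurable threshold $K$ in a contradiction argument) is not the obstacle the paper actually has to overcome.
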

\begin{proof}
Let 
\[
m:=\underset{f\in co_{L^0}(f_n)}{\essinf}\underset{X\in E}{\esssup}f(X)
\]
and for $Z\in S$
\[
u(Z):={\underset{n\rightarrow\infty}{\,\esslimsup}f_n(Z)}.
\]
We wish to show that
\begin{equation} 
\label{ineq}
\underset{Z\in S}{\esssup}u(Z)\geq m.
\end{equation}

Define $M:=\underset{n\geq 1}{\esssup} \underset{X\in E} \esssup f_n$.

Then, given $\delta \in L^0_{++}$ arbitrary, we can choose $\lambda \in L^0_{++}$ with $0<\lambda<1$ fulfilling
\begin{equation} 
\label{ineq2}
m-\delta(1+\lambda)-M\lambda\geq (m-2\delta)(1-\lambda).
\end{equation}  

Define $C_n:=co_{L^0}\left\{f_n; \: p \geq n \right\}$. Now, using induction, we are going to prove that, for each $n\in \N$, there exist $g_n \in C_n$ such that 

\begin{equation} 
\label{ineq3}
\gamma_n(g_n)\leq \underset{g\in C_n}{\essinf} \gamma_n(g) + \delta \left( \frac{\lambda}{2}\right)^n, 
\end{equation} 

with
\[
\gamma_n(h):=\underset{Z\in S}{\esssup}\left(\underset{p\leq n-1}{\sum}{\lambda^{p-1}g_p} + \lambda^{n-1}h \right)
\textnormal{, for } h\in C_n.
\] 

For $n>1$, let us considerate that we have got $g_1,g_2,...,g_{n-1}$ fulfilling \ref{ineq3} for $n-1$. We are going to find an appropriate $g_n$.  

We claim that the set
\[
\left\{\gamma_n(h) ;\: h\in C_n \right\}
\]  
is downwards directed.

Indeed, given $g$, $g'\in C_n$, define $A:=(\gamma_n(g)\leq \gamma_n(g'))$ and take $\hat{g}:={1_A}g+1_{A^c}g'$.
 
Then, $\hat{g}\in C_n$ and we have 
\[
1_A \gamma_n(\hat{g}) = 1_A \gamma_n(g) \textnormal{ and}
\] 
\[
1_{A^c} \gamma_n(\hat{g}) = 1_{A^c} \gamma_n(g'). 
\] 
Adding up both equations yields
\[
\gamma_n(\hat{g})=\gamma_n(g) \wedge \gamma_n(g').
\] 
 
Thereby, there exists a sequence $\left\{h_k\right\}_{k\in\N}$ in $C_n$ such that $\gamma_n(h_k)\searrow \underset{g\in C_n}{\essinf} \gamma_n(g)$.

Now, we consider the sequence of sets 
\[
A_{0}:=\phi, 
\]
\[
A_{k}:=\left(\gamma_n(h_k)<\underset{g\in C_n}{\essinf} \gamma_n(g) + \delta \left( \frac{\lambda}{2}\right)^n\right)-A_{k-1} \textnormal{ for } k>0.
\]

Thus, $\left\{A_k\right\}_{k\in\N}$ is a partition of $\Omega$ and we define 
\[
g_n:=\underset{k\geq 1}{\sum}1_{A_k} h_k. 
\]
Then, $g_n \in C_n$ and for each $k\in\N$,
\[
1_{A_k}\gamma_n(g_n)=1_{A_k}\gamma_n(h_k)\leq 
1_{A_k}\left[\underset{g\in C_n}{\essinf} \gamma_n(g) + \delta \left( \frac{\lambda}{2}\right)^n \right]
\]
and, therefore
\[
\gamma_n(g_n)\leq 
\underset{g\in C_n}{\essinf} \gamma_n(g) + \delta \left( \frac{\lambda}{2}\right)^n.
\] 
Now, since for each $n\in \N$
\[
\frac{g_n + \lambda g_{n+1}}{1+\lambda}\in C_n,
\]
 and applying \ref{ineq3}, it follows
\begin{equation} 
\label{ineq4}
\gamma_n(g_n)\leq \gamma_n\left(\frac{g_n + \lambda g_{n+1}}{1+\lambda}\right) + \delta \left( \frac{\lambda}{2}\right)^n. 
\end{equation} 
Let $s_0:=0$, $s_n:=\sum_{p\leq n}\lambda^{p-1}g_p$ for $n\geq 1$, and $s:=\sum_{n\geq 1}\lambda^{n-1}g_n$. Multiplying \ref{ineq4} by $(1+\lambda)$, 
\[
(1+\lambda)\underset{Z\in S}{\esssup} s_n \leq 
\underset{Z\in S}{\esssup}(\lambda s_{n-1} + s_{n+1}) +  \delta (1+\lambda) \left( \frac{\lambda}{2}\right)^n
\]   
\[
\leq \lambda \underset{Z\in S}{\esssup} s_{n-1} + \underset{Z\in S}{\esssup} s_{n+1} +  \delta (1+\lambda) \left( \frac{\lambda}{2}\right)^n,
\]
for all $n\geq 1$. 

Thus, 
\[
\lambda^{-n}(\underset{Z\in S}{\esssup} s_{n+1} - \underset{Z\in S}{\esssup} s_n) \geq \lambda^{-n+1} (\underset{Z\in S}{\esssup} s_{n} - \underset{Z\in S}{\esssup} s_{n-1}) - \frac{\delta (1+\lambda)}{2^n}.
\]
Since $\underset{Z\in S}{\esssup} s_{1} - \underset{Z\in S}{\esssup} s_0=\underset{Z\in S}{\esssup} s_{1}\geq m$, using the above inequality and induction it holds that, for all $n\geq 1$ 
\[
\lambda^{-n+1}(\underset{Z\in S}{\esssup} s_{n} - \underset{Z\in S}{\esssup} s_{n-1})\geq m - \delta(1+\lambda)\sum_{i=1}^{n-1}2^{-i}\geq
\]
\[
\geq m - \delta(1+\lambda). 
\]
Hence,
\[
\underset{Z\in S}{\esssup} s - \underset{Z\in S}{\esssup} s_{n-1} = 
\underset{p\geq n}{\sum}(\underset{Z\in S}{\esssup} s_{p} - \underset{Z\in S}{\esssup} s_{p-1}) \geq
\]
\[
\geq \underset{p\geq n}{\sum} \lambda^{p-1}[m - \delta(1+\lambda)],
\]
and it leads to
\begin{equation} 
\label{ineq5}
\underset{Z\in S}{\esssup} s - \underset{Z\in S}{\esssup} s_{n-1} \geq \frac{\lambda^{n-1}}{1-\lambda} [m - \delta(1+\lambda)].
\end{equation}

Now, applying the assumption to $(1-\lambda)s$, there exists $Z_0\in S$ such that 
\[
s(Z_0)= \underset{Z\in S}{\esssup} s.
\]
For each $n\in\N$, we have
\[
\lambda^{n-1}g_n=s(Z_0)-s_{n-1}(Z_0)-\underset{p\geq n+1}{\sum}\lambda^{p-1}g_p(Z_0)\geq
\]
\[
\geq \underset{Z\in S}{\esssup} s - \underset{Z\in S}{\esssup} s_{n-1} - \underset{p\geq n+1}{\sum}\lambda^{p-1}M\geq
\]
\[
\overset{\ref{ineq5}}{\geq} \frac{\lambda^{n-1}}{1-\lambda} [m - \delta(1+\lambda)] - \frac{\lambda^n}{1-\lambda}M.
\]

Thereby, by \ref{ineq2} it follows that $g_n(Z_0)\geq m - 2\delta$ for each $n\in\N$.

Finally, since $g_n\in C_n$,
\[
\underset{m\geq n}{\esssup} f_m(Z_0)=\underset{g \in C_n}{\esssup} g(Z_0)\geq
\]
\[
\geq g_n(Z_0)\geq m - 2\delta,
\]
and taking essential infimum on $n$
\[
\underset{n\rightarrow\infty}{\esslimsup}f_m(Z_0) =\underset{n}{\essinf}\underset{m\geq n}{\esssup} f_m(Z_0)\geq g_n(Z_0)\geq m - 2\delta.
\]
 The result follows since $\delta\in L^0_{++}$ is arbitrary.

\end{proof}

\end{document}